\newcommand{\CM}{Cohen-Macaulay}
\newcommand{\m}{\mathfrak{m} }
\newcommand{\q}{\mathfrak{q} }
\newcommand{\p}{\mathfrak{p} }
\newcommand{\D}{\mathbb{D} }
\newcommand{\K}{\mathbb{K} }
\newcommand{\rt}{\rightarrow}
\newcommand{\I}{\mathbb{I} }
\newcommand{\image}{\operatorname{image}}
\newcommand{\Ass}{\operatorname{Ass}}
\newcommand{\depth}{\operatorname{depth}}
\newcommand{\Spec}{\operatorname{Spec}}
\newcommand{\height}{\operatorname{height}}
\newcommand{\Mod}{\operatorname{Mod}}
\newcommand{\Min}{\operatorname{Min}}
\newcommand{\Hom}{\operatorname{Hom}}
\newcommand{\Supp}{\operatorname{Supp}}
\theoremstyle{plain}
\newtheorem{theorem}{Theorem}[section]
\newtheorem{lemma}[theorem]{Lemma}
\newtheorem{proposition}[theorem]{Proposition}
\theoremstyle{definition}
\theoremstyle{remark}
\begin{document}

\title{On functors that detect  $S_n$}
 \author{Tony J. Puthenpurakal}
\date{\today}
\address{Department of Mathematics, Indian Institute of Technology Bombay, Powai, Mumbai 400 076, India}
\email{tputhen@math.iitb.ac.in}
\subjclass{13C14, 13D02, 13F20 }
\keywords{$S_n$-property, equidimensional modules}
\begin{abstract}
Let $A$ be a Noetherian ring. For each $k$ where $0 \leq k \leq \dim A$ we construct left exact functors $D_k$ on $\Mod(A)$. Let $D^i_k$ be the $i^{th}$-right derived functor of $D_k$.  Let $M$ be a finitely generated $A$-module. Under mild conditions on $A$ and $M$ we prove that vanishing of some finitely many $D^i_k(M)$ is equivalent to $M$ satisfying  $S_n$.
\end{abstract}

\maketitle

\section{introduction}
Let $A$ be a Noetherian ring and let $M$ be a finitely generated $A$-module. Let $n \geq 0$ be a non-negative integer. Recall that $M$ satisfies $S_n$ if
\[
\depth M_\p \geq \min\{n , \dim M_\p \} \quad \text{for all primes} \ \p \ \text{in} \ A.
\]
Note that by convention the zero module has depth $+ \infty$ and dimension $-1$.
In this paper we construct functors which  (under mild conditions) detect whether $M$ satisfies $S_n$.

Let $E$ be a not-necessarily finitely generated $A$-module. By $\dim E$ we mean dimension of the support of $E$ considered as a subspace of 
$\Spec A$.  Let $k \geq 0$ be an integer.
Set 
\[
D_k(E)  = \sum_{\substack{N \ \text{submodule of } \ E \ \\ \   \dim N \leq k}} N
\] 
Clearly $D_k(E)$ is a submodule of $E$. Also if $\phi \colon E \rt F$ is $A$-linear then it is easy to verify that $\phi(D_k(E)) \subseteq D_k(F)$.
Set $D_k(\phi) \colon D_k(E) \rt D_k(F)$ to be the restriction of $\phi$ on $D_k(E)$.
Clearly we have an additive  functor $D_k$ on $\Mod(A)$. It can be shown that $D_k$ is left exact; see section 2. Let $D^i_k$ be the $i^{th}$-right derived functor of $D_k$.
 
 To prove our results we need to assume that the ring $A$ satisfies certain conditions.
 
 \s \label{assump}  We assume that $A$  satisfies the following properties:
 
 \begin{enumerate}
 \item
 $ \dim A$ is finite.
 \item
  $A$ is catenary.
 \item
 $A$ is equi-dimensional, i.e.,  $\dim A/\p = \dim A$ for all minimal primes $\p$ of $A$.
 \item
 If $\m$ is a maximal ideal in $A$ and $\p$ is a minimal prime of $A$ then 
 \\ $\height(\m/\p) = \dim A$. 
 \end{enumerate}
 
We now give examples of rings which satisfy the hypotheses in \ref{assump}:
\begin{enumerate}[ \rm(i)]

\item

$A =  R/I$ where $R = K[X_1,\cdots, X_n]$  and  $I$ is an equi-dimensional ideal in  $R$, i.e., $\height \p  = \height I$ for all minimal primes $\p$ of $I$.

\item
$A = R/I$ where $R =  \mathcal{O}[X_1,\ldots,X_n]$; $\mathcal{O}$ is the ring of integers in a number field (i.e., a finite extension of $\mathbb{Q}$) and $I$ is an unmixed ideal of $R$.

\item

$A = R/I$  where $R$ is a \CM \  local ring and $I$ is an equi-dimensional ideal. 

\item

$A$ is a catenary local domain.

\end{enumerate}

Recall a finitely generated   $A$-module $M$ is said to be equi-dimensional if $\dim M$ is finite and $\dim A/\p = \dim M$ for all minimal primes of $M$.
Our main result is

\begin{theorem} \label{main}
Let $A$ be a Noetherian ring satisfying the hypotheses in \ref{assump} and let $M$ be a finitely generated equi-dimensional $A$-module of dimension $\geq 1$. Let $n$ be an integer between $1$ and $\dim M$. Then the following conditions are equivalent:
\begin{enumerate}[\rm (i)]
\item
$M$ satisfies $S_n$.
\item
 $D^i_k(M) = 0$ for  $i = 0,1,\ldots, n-1$ and $0 \leq k < \dim M - i$.
  \end{enumerate}
  \end{theorem}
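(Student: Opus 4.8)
The plan is to compute the groups $D^i_k(M)$ from a minimal injective resolution of $M$ and then play the resulting long exact sequences against the description of $S_n$ in terms of local cohomology of localizations. A preliminary step is a localization formula. Using that $A$ is catenary, equidimensional and satisfies \ref{assump}(4), one checks that each $A_\p$ again satisfies \ref{assump} and that $\dim A/\q = \dim A/\p + \dim A_\p/\q A_\p$ for all primes $\q \subseteq \p$. Since $D_k(E)$ is the torsion submodule of $E$ with respect to the specialization closed set $Z_k = \{\, \p \in \Spec A : \dim A/\p \le k \,\}$, this dimension formula shows that $Z_k$ localizes at $\p$ to the set of the same type $Z_{k-\dim A/\p}$ in $\Spec A_\p$, whence $(D_k(E))_\p = D_{k-\dim A/\p}(E_\p)$ as $A_\p$-modules. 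As localization is exact and preserves injectivity over a Noetherian ring, it follows that $(D^i_k(M))_\p \cong D^i_{k-\dim A/\p}(M_\p)$ for all $i$.

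The structural core is the following. Let $E^\bullet$ be a minimal injective resolution of $M$, so $E^j = \bigoplus_{\p} E(A/\p)^{\mu^j(\p,M)}$. Using that $\Hom_A(E(A/\q),E(A/\p)) \ne 0$ if and only if $\q \subseteq \p$, the differential of $E^\bullet$ carries an $E(A/\q)$-summand only into $E(A/\q')$-summands with $\q \subseteq \q'$, in particular with $\dim A/\q' \le \dim A/\q$. Hence $F_k := D_k(E^\bullet) = \bigoplus_{\dim A/\p \le k} E(A/\p)^{\mu^\bullet(\p,M)}$ is a subcomplex of $E^\bullet$, so $D^i_k(M) = H^i(F_k)$, and one obtains an increasing filtration $0 = F_{-1} \subseteq F_0 \subseteq \cdots \subseteq F_d = E^\bullet$ whose associated graded complex $F_k/F_{k-1} = \bigoplus_{\dim A/\p = k} E(A/\p)^{\mu^\bullet(\p,M)}$ retains only the ``$\p$-to-$\p$'' part of the differential; a local computation identifies that part with the $\p$-torsion of the minimal injective resolution of $M_\p$, giving $H^i(F_k/F_{k-1}) \cong \bigoplus_{\dim A/\p = k} H^i_{\p A_\p}(M_\p)$. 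Since $H^\bullet(E^\bullet)$ is $M$ in degree $0$ and zero otherwise, the short exact sequences $0 \to F_{k-1} \to F_k \to F_k/F_{k-1} \to 0$ yield, for each $k \ge 0$, long exact sequences
\[
\cdots \to D^i_{k-1}(M) \to D^i_k(M) \to \bigoplus_{\dim A/\p = k} H^i_{\p A_\p}(M_\p) \to D^{i+1}_{k-1}(M) \to \cdots
\]
with $D^\bullet_{-1}=0$, so in particular $D^i_0(M) \cong \bigoplus_{\p\ \text{maximal}} H^i_{\p A_\p}(M_\p)$.

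To finish I would use that $\depth M_\p$ is the least $i$ with $H^i_{\p A_\p}(M_\p) \ne 0$, so that $M$ satisfies $S_n$ if and only if $H^i_{\p A_\p}(M_\p)=0$ for all $\p$ and all $i < \min\{n,\dim M_\p\}$, together with the fact that the hypotheses give $\dim M_\p = \dim M - \dim A/\p$ for $\p \in \Supp M$; this makes $S_n$ equivalent to the vanishing of $H^i(F_k/F_{k-1})$ for all $k$ and all $i$ with $i \le n-1$ and $i + k \le \dim M - 1$. For (i)$\Rightarrow$(ii) one feeds this vanishing into the displayed long exact sequences and induces on $k$, starting from $D^i_0(M) \cong H^i(F_0)$: there $D^i_k(M)$ is squeezed between $D^i_{k-1}(M)$, zero by the inductive hypothesis, and $H^i(F_k/F_{k-1})$, zero by assumption. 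For (ii)$\Rightarrow$(i) I would induct on $\dim M$ and fix $\p \in \Supp M$: if $\dim A/\p \ge 1$ then $\dim M_\p < \dim M$, and the localization formula turns (ii) for $M$ into (ii) for $M_\p$ over $A_\p$ with $n$ replaced by $\min\{n,\dim M_\p\}$ (the case $\dim M_\p=0$ being trivial), so the inductive hypothesis forces $\depth M_\p \ge \min\{n,\dim M_\p\}$; while if $\p$ is maximal then $H^i_{\p A_\p}(M_\p)$ is a direct summand of $D^i_0(M)$, which vanishes for $i \le n-1$ by (ii), giving $\depth M_\p \ge n = \min\{n,\dim M_\p\}$. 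Since for $\dim M = 1$ every non-maximal prime in $\Supp M$ already localizes $M$ to dimension $0$, no separate base case is needed.

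I expect the main obstacle to be the structural step of the second paragraph: checking that $D_k(E^\bullet)$ is a subcomplex of a minimal injective resolution and, above all, establishing the identification $H^i(F_k/F_{k-1}) \cong \bigoplus_{\dim A/\p = k} H^i_{\p A_\p}(M_\p)$, which is precisely what links the abstract derived functors $D^i_k$ to honest local cohomology. The localization formula is also delicate: it relies essentially on the biequidimensionality packaged into \ref{assump}, which is exactly what makes each $Z_k$ localize to another set of the same form. Granting these two points, the remaining bookkeeping inductions are routine.
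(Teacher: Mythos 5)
Your proof is correct, and it takes a genuinely different route from the paper. The paper argues by induction on $n$: for (i)$\Rightarrow$(ii) it invokes the classical Bass/Ischebeck-type fact that for an $S_n$-module the minimal injective resolution satisfies $\I^i = \bigoplus_{\dim M_\p \le i} E(A/\p)^{\mu^i(\p,M)}$ for $i \le n-1$, then bounds $\dim$ of submodules of $\I^{n-1}$ using Lemma \ref{ht-prep}; for (ii)$\Rightarrow$(i) it contradicts the vanishing at a witness $\p$ with $\depth M_\p = n-1 < n \le \dim M_\p$ by passing through Theorem \ref{local}. You instead begin with the observation that $D_k = \Gamma_{Z_k}$ for the specialization-closed set $Z_k = \{\p : \dim A/\p \le k\}$ (which, incidentally, gives left exactness and the localization formula for free), filter the minimal injective resolution by $F_k = D_k(\I^\bullet)$, and identify $H^i(F_k/F_{k-1}) \cong \bigoplus_{\dim A/\p = k} H^i_{\p A_\p}(M_\p)$ via the fact that $\Hom_A(E(A/\q), E(A/\p)) \ne 0$ forces $\q \subseteq \p$ and a local identification of the $\p$-to-$\p$ differential with $\Gamma_{\p A_\p}(\I^\bullet_\p)$. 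This makes the relationship between $D^i_k$ and honest local cohomology completely explicit, after which your inductions — on $k$ for (i)$\Rightarrow$(ii) via the filtration's long exact sequences, and on $\dim M$ for (ii)$\Rightarrow$(i) via localization — are clean bookkeeping. The paper's proof is shorter but black-boxes the Ischebeck-type structure result; yours replaces that citation with a structural computation that is more informative (it gives the long exact sequences relating $D^i_{k-1}(M)$, $D^i_k(M)$, and $\bigoplus_{\dim A/\p = k} H^i_{\p A_\p}(M_\p)$, which are of independent interest). Both proofs hinge equally on the localization statement, which is where the biequidimensionality hypotheses in \ref{assump} do their real work.
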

  
Here is an overview of the contents of the paper. In section two we define our functors $D_k$ and prove a few basic properties. In section three we prove a crucial result regarding localization of our functors $D_k$. Finally in section four we prove Theorem \ref{main}.

\section{The functors $D_k$}
In this section we define the functors $D_k$ and prove some of its basic properties.
Throughout $A$ is a Noetherian ring. The $A$-modules considered in this section need not be finitely generated .
\s Let $E$ be a $A$-module. Let $\Supp E$ denote the support of $E$. Set $\dim E = \dim \Supp E$. The following result is well-known
\begin{proposition}\label{basic-prop}
Let $0 \rt E_1 \rt E_2 \rt E_3 \rt 0$ be an exact sequence of $A$-modules. Then
\begin{enumerate}[\rm (a)]
\item
$\Supp E_2 = \Supp E_1 \cup \Supp E_3$.
\item
$\dim E_2 = \max \{ \dim E_1, \dim E_3 \}$.
\end{enumerate}
\end{proposition}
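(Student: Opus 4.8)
The plan is to deduce both assertions from the exactness of localization. For part (a), fix a prime $\p \in \Spec A$ and localize the given sequence at $\p$; since localization is exact we obtain an exact sequence $0 \rt (E_1)_\p \rt (E_2)_\p \rt (E_3)_\p \rt 0$. If $(E_2)_\p = 0$ then its submodule $(E_1)_\p$ and its quotient $(E_3)_\p$ both vanish. Conversely, if $(E_1)_\p = (E_3)_\p = 0$, then the map $(E_2)_\p \rt (E_3)_\p$ has kernel equal to the image of $(E_1)_\p$, hence is injective, and its target is zero, so $(E_2)_\p = 0$. Therefore $(E_2)_\p \neq 0$ precisely when $(E_1)_\p \neq 0$ or $(E_3)_\p \neq 0$, which is exactly the statement $\Supp E_2 = \Supp E_1 \cup \Supp E_3$.

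For part (b), I would recall that for any $A$-module $E$ the Krull dimension of the topological subspace $\Supp E \subseteq \Spec A$ equals $\sup \{ \dim A/\p : \p \in \Supp E \}$, with the convention that the supremum over the empty set is $-1$ (consistent with the stated convention $\dim 0 = -1$, as the zero module has empty support). Applying part (a), $\Supp E_2 = \Supp E_1 \cup \Supp E_3$; since the supremum over a union of two sets equals the maximum of the two suprema, we get $\dim E_2 = \max\{\dim E_1, \dim E_3\}$.

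I do not expect any genuine obstacle here; this is a routine ``warm-up'' lemma. The only points worth stating carefully are the bookkeeping around the zero module (empty support, dimension $-1$) and the identification of $\dim \Supp E$ with the supremum of the coheights $\dim A/\p$ over primes $\p$ in the support, both of which are standard facts about $\Spec A$.
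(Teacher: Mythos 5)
Your proof is correct. Note that the paper does not actually supply a proof of this proposition; it simply declares it ``well-known'' and moves on. Your argument is the standard one: part (a) via exactness of localization, and part (b) by reducing $\dim \Supp E$ to $\sup\{\dim A/\p : \p \in \Supp E\}$, which is legitimate because $\Supp E$ is stable under specialization (this is the key fact making the dimension-as-supremum identity work even for non-finitely-generated modules, and it is worth making explicit). The handling of the zero-module convention ($\dim 0 = -1$ via empty supremum) is consistent with the paper's stated convention. No gaps.
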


\s We now define our functors $D_k$. Let $k \geq 0$ be an integer. Let $E$ be an $A$-module. Set 
\[
D_{k,A}(E)  = \sum_{\substack{N \ \text{submodule of } \ E \ \\ \   \dim N \leq k}} N.
\] 
We suppress $A$ in $D_{k,A}(E)$ if it is clear from the context. 
Clearly $D_k(E)$ is a submodule of $E$. The following Lemma is useful.
\begin{lemma}\label{bl}
Let $\xi \in D_k(E)$. Then there exists a finitely generated $A$-submodule $M$  of $E$
with $\xi \in M$ and $\dim M \leq k$.
\end{lemma}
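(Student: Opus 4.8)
The plan is to unwind the definition of $D_k(E)$. By definition $D_k(E)$ is the sum of all submodules $N \subseteq E$ with $\dim N \le k$; hence any $\xi \in D_k(E)$ lies in a finite sum of such submodules, so we may write $\xi = \xi_1 + \cdots + \xi_r$ with $\xi_j \in N_j$ for suitable submodules $N_j$ of $E$ satisfying $\dim N_j \le k$. I would then set $M = A\xi_1 + \cdots + A\xi_r$. By construction $M$ is finitely generated and $\xi \in M$, so the only thing left to verify is $\dim M \le k$.

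For the dimension bound, note first that $A\xi_j \subseteq N_j$, hence $\Supp(A\xi_j) \subseteq \Supp N_j$ and so $\dim A\xi_j \le \dim N_j \le k$. It therefore suffices to show that a finite sum of submodules of dimension $\le k$ again has dimension $\le k$, and by induction this reduces to the case of two submodules $P, Q$ with $\dim P \le k$ and $\dim Q \le k$. Applying Proposition \ref{basic-prop}(b) to the exact sequence $0 \rt P \rt P + Q \rt (P+Q)/P \rt 0$, and observing that $(P+Q)/P \cong Q/(P\cap Q)$ is a quotient of $Q$ and so has dimension $\le \dim Q \le k$, we obtain $\dim(P+Q) = \max\{\dim P, \dim((P+Q)/P)\} \le k$. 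Iterating gives $\dim M \le k$, as required.

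I do not expect any real obstacle here: the statement is essentially a bookkeeping consequence of the definition of $D_k$ together with the additivity of dimension along short exact sequences recorded in Proposition \ref{basic-prop}. The only mild point to be careful about is to pass from each $\xi_j$ to the cyclic submodule $A\xi_j$, rather than retaining the (possibly non-finitely generated) $N_j$, so that the module $M$ produced is genuinely finitely generated while still having dimension at most $k$.
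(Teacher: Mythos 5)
Your proof is correct and follows essentially the same route as the paper: unwind the definition of $D_k(E)$ to write $\xi$ as a finite sum, control dimensions via Proposition~\ref{basic-prop}, and pass to a finitely generated submodule. The only cosmetic difference is that the paper first forms $N = N_1 + \cdots + N_s$, bounds $\dim N$ via the surjection $\bigoplus_i N_i \to N$, and then takes the cyclic module $M = A\xi \subseteq N$; you instead take $M = A\xi_1 + \cdots + A\xi_r$ and bound the dimension of the sum by iterating the short exact sequence $0 \to P \to P+Q \to (P+Q)/P \to 0$. Both are valid and equally elementary.
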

\begin{proof}
There exists $A$-submodules $N_1, \cdots, N_s$ of $E$ with 
$\dim N_i \leq k$ and 
$\xi = n_1 + n_2 + \cdots + n_s$ where $n_i \in N_i$.

Set $N = N_1 + N_2 + \cdots + N_s$. There is a natural surjective map 
$\bigoplus_{i=1}^{s} N_i \rt N$. By \ref{basic-prop}   it follows that $\dim N \leq k$. Also $\xi \in N$.

Set $M = A\xi \subseteq N$. By \ref{basic-prop}   it follows that $\dim M \leq k$. Also $\xi \in M$.
\end{proof}
\begin{proposition}\label{map}
Let $\phi \colon E \rt F$ be $A$-linear. Then $\phi(D_k(E)) \subseteq D_k(F)$.
\end{proposition}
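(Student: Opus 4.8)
The plan is to reduce the claim to the previous lemma. Let $\xi \in D_k(E)$. By Lemma~\ref{bl} there exists a finitely generated $A$-submodule $M$ of $E$ with $\xi \in M$ and $\dim M \leq k$. Apply $\phi$ to the inclusion $M \hookrightarrow E$ and consider the submodule $\phi(M)$ of $F$; this is finitely generated, being a homomorphic image of $M$, and $\phi(\xi) \in \phi(M)$.

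The key step is then to observe that $\dim \phi(M) \leq \dim M \leq k$. Indeed, $\phi$ restricts to a surjection $M \twoheadrightarrow \phi(M)$, so $\phi(M)$ fits into a short exact sequence with $M$ as the middle term (take the kernel of $M \to \phi(M)$ as the submodule); by Proposition~\ref{basic-prop}(b) we get $\dim M = \max\{\dim(\ker), \dim \phi(M)\} \geq \dim \phi(M)$. Hence $\phi(M)$ is a submodule of $F$ of dimension $\leq k$, and therefore $\phi(M) \subseteq D_k(F)$ by the very definition of $D_k(F)$ as the sum of all such submodules. In particular $\phi(\xi) \in D_k(F)$. Since $\xi \in D_k(E)$ was arbitrary, this gives $\phi(D_k(E)) \subseteq D_k(F)$.

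There is no real obstacle here: every ingredient is already in place, and the argument is essentially a one-line deduction from Lemma~\ref{bl} together with the behaviour of dimension under quotients in Proposition~\ref{basic-prop}. The only point requiring a moment's care is that one genuinely needs the finite-generation (or at least the existence of \emph{some} small-dimensional submodule containing $\xi$) supplied by Lemma~\ref{bl}: a priori $\xi$ lies in a \emph{sum} of submodules of dimension $\leq k$, and while that sum also has dimension $\leq k$ (again by Proposition~\ref{basic-prop}), it is cleanest to pass to the cyclic submodule $A\xi$ as in the lemma and then push it forward. Alternatively, one could argue directly: $\phi\bigl(\sum_i N_i\bigr) = \sum_i \phi(N_i)$, each $\phi(N_i)$ has dimension $\leq \dim N_i \leq k$, and so the sum lies in $D_k(F)$; this avoids invoking Lemma~\ref{bl} at all. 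Either route is short.
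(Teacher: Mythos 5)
Your argument is correct and is essentially identical to the paper's proof: both pick up the finitely generated submodule from Lemma~\ref{bl}, push it forward along $\phi$, and bound $\dim \phi(M)$ by $\dim M$ via Proposition~\ref{basic-prop}. The alternative you sketch at the end (pushing the sum $\sum_i N_i$ forward termwise) is also fine, but the route you actually take matches the paper's.
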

\begin{proof}
Let $\xi \in D_k(E)$. Then  by Lemma \ref{bl}   there exists  a finitely generated $A$-submodule $N$ of $E$ with 
$\dim N\leq k$ and 
$\xi \in N$.
Then $\phi(\xi) \in \phi(N)$. Clearly $\phi(N)$ is an $A$-submodule of $F$.  Furthermore $\phi$ induces  a surjective map $N \rt \phi(N)$. By \ref{basic-prop} we get that $\dim \phi(N) \leq k$. Thus $\phi(\xi) \in D_k(F)$.
\end{proof}

\s
Set $D_k(\phi) \colon D_k(E) \rt D_k(F)$ to be the restriction of $\phi$ on $D_k(E)$.
Clearly we have an additive  functor $D_k$ on $\Mod(A)$. We show
\begin{proposition}\label{left}
$D_k$ is left exact.
\end{proposition}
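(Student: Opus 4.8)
The plan is to check the two defining properties of left exactness directly: that $D_k$ preserves kernels (equivalently, preserves monomorphisms and preserves the exactness at the left two spots of a short exact sequence). Concretely, given an exact sequence $0 \rt E_1 \xrightarrow{f} E_2 \xrightarrow{g} E_3$, I want to show that the induced sequence $0 \rt D_k(E_1) \xrightarrow{D_k(f)} D_k(E_2) \xrightarrow{D_k(g)} D_k(E_3)$ is exact. By Proposition \ref{map} the maps $D_k(f)$ and $D_k(g)$ are well-defined, and functoriality gives $D_k(g) \circ D_k(f) = 0$, so the only real content is injectivity of $D_k(f)$ and the inclusion $\ker D_k(g) \subseteq \image D_k(f)$.

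First I would dispose of injectivity: $D_k(f)$ is the restriction of the injective map $f$ to the submodule $D_k(E_1) \subseteq E_1$, hence injective. For the harder containment, take $\xi \in D_k(E_2)$ with $g(\xi) = 0$. Since $\ker g = \image f$ and $f$ is injective, there is a unique $\eta \in E_1$ with $f(\eta) = \xi$; I must show $\eta \in D_k(E_1)$, i.e. that $\eta$ lies in some submodule of $E_1$ of dimension $\leq k$. The natural candidate is $N := f^{-1}(A\xi)$, but more economically: apply Lemma \ref{bl} to get a finitely generated submodule $M \subseteq E_2$ with $\xi \in M$ and $\dim M \leq k$, and set $N := f^{-1}(M) \cap A\eta$, or simply $N := A\eta$. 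Since $f$ restricts to an injection $A\eta \hookrightarrow M$ (as $f(A\eta) = A\xi \subseteq M$), Proposition \ref{basic-prop}(b) applied to the short exact sequence $0 \rt A\eta \rt M \rt M/A\eta \rt 0$ gives $\dim A\eta \leq \dim M \leq k$. Hence $\eta \in D_k(E_1)$ and $\xi = D_k(f)(\eta) \in \image D_k(f)$, as desired.

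I do not anticipate a serious obstacle here; the only point that needs a little care is the bookkeeping around the identification $\ker g = \image f$ and the fact that the preimage $\eta$ of a fixed element $\xi$ generates a cyclic submodule whose image under $f$ is cyclic, so that the support (and hence dimension) cannot increase — this is exactly the content of Proposition \ref{basic-prop}(b) applied to an injection. One should also remark that left exactness is all we use; $D_k$ is generally not right exact, which is precisely why the derived functors $D^i_k$ carry interesting information.
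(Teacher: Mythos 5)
Your proof is correct and follows essentially the same route as the paper's: reduce to the containment $\ker D_k(g)\subseteq\image D_k(f)$, pull back the finitely generated witness $M$ from Lemma~\ref{bl}, and compare dimensions via the injection into $M$ using Proposition~\ref{basic-prop}(b). (The paper uses the preimage $f^{-1}(M)$ rather than the cyclic module $A\eta$, but this is an inessential variation; one should really write the short exact sequence with $A\xi=f(A\eta)\subseteq M$ and then note $\dim A\eta=\dim A\xi$ since isomorphic modules have the same support.)
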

\begin{proof}
Let $0 \rt E \xrightarrow{\alpha} F \xrightarrow{\beta} G$ be an exact sequence. We want to prove that the sequence 
$$0 \rt D_k(E) \xrightarrow{D_k(\alpha)} D_k(F) \xrightarrow{D_k(\beta)} D_k(G),$$
is exact.

Clearly $D_k(\alpha)$ is injective. Also  
$$D_k(\beta) \circ D_k(\alpha) = D_k(\beta \circ \alpha) = D_k(0) = 0,$$
as $D_k$ is an additive functor. Therefore $\image D_k(\alpha) \subseteq \ker D_k(\beta)$.

Let $\xi \in \ker D_k(\beta)$. In particular $\xi \in \ker \beta$. So there exists $e \in E$ with $\alpha(e) = \xi$. 
As $\xi \in D_k(F)$, 
 by Lemma \ref{bl}   there exists  a finitely generated $A$-submodule $N$ of $F$ with 
$\dim N\leq k$ and 
$\xi \in N$.
Note that $\alpha$ induces an exact sequence
\[
0 \rt \alpha^{-1}(N) \rt N.
\]
 By \ref{basic-prop} we get that $\dim \alpha^{-1}(N) \leq k$. Also $e \in \alpha^{-1}(N)$. It follows that $e \in D_k(E)$. Thus $D_k$ is left exact.
\end{proof}

We need the following two properties of $D_k$.
\begin{proposition}\label{dir-sum}
\begin{enumerate}[\rm (a)]
\item
Let $E$ be an $A$-module and let $L$ be an $A$-submodule of $E$. Then
$D_k(L) = D_k(E)\cap L$.
\item
Let $E_\alpha$ be a family of $A$-modules with $\alpha \in \Gamma$. Then
\[
D_k\left( \bigoplus_{\alpha \in \Gamma} E_{\alpha} \right) =  \bigoplus_{\alpha \in \Gamma} D_k(E_{\alpha}).
\]
\end{enumerate}
\end{proposition}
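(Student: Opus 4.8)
The plan is to reduce every dimension estimate to a finitely generated — in fact cyclic — submodule, where Proposition \ref{basic-prop}(b) applies directly, and then to invoke Lemma \ref{bl} to produce such submodules.

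For part (a), I would first dispose of the easy inclusion $D_k(L) \subseteq D_k(E) \cap L$: trivially $D_k(L) \subseteq L$, and $D_k(L) \subseteq D_k(E)$ either because any submodule $N$ of $L$ with $\dim N \le k$ is also a submodule of $E$ with $\dim N \le k$, or by applying Proposition \ref{map} to the inclusion $L \hookrightarrow E$. For the reverse inclusion I would take $\xi \in D_k(E) \cap L$ and apply Lemma \ref{bl} to obtain a finitely generated submodule $M$ of $E$ with $\xi \in M$ and $\dim M \le k$; then $A\xi \subseteq M$, so $\dim A\xi \le k$ by Proposition \ref{basic-prop}(b), while $A\xi \subseteq L$ because $\xi \in L$. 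Thus $A\xi$ witnesses $\xi \in D_k(L)$.

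For part (b), write $E = \bigoplus_{\alpha \in \Gamma} E_\alpha$, with canonical injections $\iota_\alpha$ and projections $\pi_\alpha$. The inclusion $\bigoplus_\alpha D_k(E_\alpha) \subseteq D_k(E)$ is immediate: every submodule $N$ of $E_\alpha$ with $\dim N \le k$ is, via $\iota_\alpha$, a submodule of $E$ of dimension $\le k$, hence contained in $D_k(E)$; summing over all such $N$ and all $\alpha$ gives the claim (the internal sum of the $D_k(E_\alpha)$ inside $E$ being their direct sum since the $E_\alpha$ are independent). For the reverse inclusion I would take $\xi \in D_k(E)$, use Lemma \ref{bl} to obtain a finitely generated $M \subseteq E$ with $\xi \in M$ and $\dim M \le k$, and observe that $\pi_\alpha$ maps $M$ onto the submodule $\pi_\alpha(M) \subseteq E_\alpha$, so $\dim \pi_\alpha(M) \le \dim M \le k$ by Proposition \ref{basic-prop}(b); hence $\pi_\alpha(\xi) \in D_k(E_\alpha)$. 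Since $\xi$ lies in a direct sum it has only finitely many nonzero components, so $\xi = \sum_\alpha \iota_\alpha \pi_\alpha(\xi) \in \bigoplus_\alpha D_k(E_\alpha)$.

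Neither part is really an obstacle; the one thing to be careful about is that the dimension bounds should be extracted by passing to a finitely generated (cyclic) submodule via Lemma \ref{bl}, rather than by trying to argue directly about the possibly infinite sums defining $D_k$. In part (b), the finiteness of the support of an element of a direct sum is exactly what lets the reverse inclusion close up.
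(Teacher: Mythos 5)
Your proof is correct and takes essentially the same approach as the paper. The only cosmetic differences: in part (a) you cut down to the cyclic submodule $A\xi$ where the paper instead intersects the witnessing finitely generated submodule $N$ with $L$, and in part (b) you project $\xi$ onto each coordinate (noting projections cannot raise dimension) where the paper intersects $N$ with the finite sub-direct-sum supporting $\xi$ and then invokes finite additivity of $D_k$; both routes run through Lemma \ref{bl} and Proposition \ref{basic-prop} in the same way.
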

\begin{proof}
(a) Clearly $D_k(L) \subseteq D_k(E)\cap L$. Let $\xi \in D_k(E)\cap L$. By \ref{bl}
there exists a finitely generated $A$-submodule $N$ of $E$ with $\dim N \leq k$ and $\xi \in N$. So $\xi \in N \cap L$. By \ref{basic-prop}, $\dim N \cap L \leq \dim N \leq k$. So $\xi \in D_k(L)$.

(b) As $D_k$ is an additive functor the result holds if $\Gamma$ is a finite set. 

It is clear that  
\[
\bigoplus_{\alpha \in \Gamma} D_k(E_{\alpha}) \subseteq D_k\left( \bigoplus_{\alpha \in \Gamma} E_{\alpha} \right).
\]
Let $\xi \in D_k\left( \bigoplus_{\alpha \in \Gamma} E_{\alpha} \right)$. By \ref{bl}
there exists a finitely generated $A$-submodule $N$ of $ \bigoplus_{\alpha \in \Gamma} E_{\alpha}$ with $\dim N \leq k$ and $\xi \in N$.
Say
\[
\xi = \sum_{i=1}^{s}\xi_{\alpha_i} \quad \text{with} \ \xi_{\alpha_i} \in M_{\alpha_i}.
\]
Then $\xi \in N^\prime$ where $N^\prime = N \cap( \bigoplus_{i =1}^{s}M_{\alpha_i})$.
By \ref{basic-prop} $\dim N^\prime \leq k$. So
\[
\xi \in D_k\left(\bigoplus_{i =1}^{s}M_{\alpha_i}\right) =  \bigoplus_{i =1}^{s}D_k(M_{\alpha_i}) \subseteq \bigoplus_{\alpha \in \Gamma} D_k(E_{\alpha}).
\]
\end{proof}

We will also need the following computation.   
\begin{lemma}\label{comput}
Assume $\dim A $ is finite. Let $\q$ be a prime ideal in $A$ and let $E(A/\q)$ is the injective hull of $A/\q$. Then
\[
D_k(E(A/\q)) = \begin{cases} E(A/\q), &\text{if $\dim A/\q \leq k$} \\
               0, &\text{otherwise.}  \end{cases} 
\]
\end{lemma}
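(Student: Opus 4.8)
The plan is to analyze $D_k(E(A/\q))$ by first understanding the structure of its submodules. Recall that $E = E(A/\q)$ is the injective hull of $A/\q$; its associated primes consist of the single prime $\q$, and $\Supp E \subseteq \{\p : \p \supseteq \q\} = V(\q)$. Since every nonzero submodule $N$ of $E$ contains a nonzero submodule of $A/\q$ (as $A/\q$ is essential in $E$), we have $\q \in \Ass N \subseteq \Supp N$, so $\Supp N \supseteq \overline{\{\q\}} = V(\q)$, whence $\dim N \geq \dim A/\q$ for every nonzero submodule $N$. Combined with $\Supp N \subseteq V(\q)$ we get $\dim N = \dim A/\q$ for all nonzero $N \subseteq E$.

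Given this, the two cases fall out cleanly. First suppose $\dim A/\q \leq k$. Then $E$ itself is a submodule of $E$ with $\dim E = \dim A/\q \leq k$, so $E \subseteq D_k(E)$; since $D_k(E) \subseteq E$ always, we conclude $D_k(E) = E$. Now suppose $\dim A/\q > k$. Then by the computation above, every nonzero submodule $N$ of $E$ has $\dim N = \dim A/\q > k$, so there is no nonzero submodule of dimension $\leq k$; the only submodule contributing to the sum defining $D_k(E)$ is the zero module, and therefore $D_k(E) = 0$.

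The one place requiring a little care—and the main (mild) obstacle—is justifying that $\dim N = \dim A/\q$ for every nonzero submodule $N$ of $E$, i.e., that the support of such an $N$ is exactly $V(\q)$. The inclusion $\Supp N \subseteq \Supp E \subseteq V(\q)$ is immediate. For the reverse, since $A/\q$ is an essential submodule of $E$, the intersection $N \cap (A/\q)$ is a nonzero submodule of $A/\q$, hence isomorphic to a nonzero ideal of the domain $A/\q$, and so has support all of $V(\q)$ (a nonzero ideal in a domain has the same radical-support as the ring modulo a nonzero element, spreading over every prime). Then $V(\q) = \Supp(N \cap A/\q) \subseteq \Supp N$ by Proposition \ref{basic-prop}(a) applied to $0 \to N\cap(A/\q) \to N$. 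This gives $\Supp N = V(\q)$ and hence $\dim N = \dim A/\q$, which is the only nontrivial input; everything else is formal. Note the hypothesis $\dim A < \infty$ is used only to ensure all the relevant dimensions are finite so the case distinction makes sense.
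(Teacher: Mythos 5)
Your proof is correct and reaches the same conclusion, but the key step is argued somewhat differently from the paper. The paper restricts attention to nonzero \emph{finitely generated} submodules $N$ (which suffices by Lemma \ref{bl}) and uses the chain $\Min N \subseteq \Ass N \subseteq \Ass E(A/\q) = \{\q\}$ to conclude $\dim N = \dim A/\q$; this relies on the fact that for a finitely generated module the minimal primes of the support lie in $\Ass N$. You instead prove the stronger statement that \emph{every} nonzero submodule $N$ of $E(A/\q)$ has $\Supp N = V(\q)$, by intersecting with the essential submodule $A/\q$ and using that a nonzero ideal of the domain $A/\q$ has full support; this bypasses the reduction to finitely generated submodules and avoids the $\Min\subseteq\Ass$ argument altogether. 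Both approaches use the essentiality of $A/\q$ in $E(A/\q)$ (yours explicitly, the paper's implicitly through $\Ass E(A/\q)=\{\q\}$). Your version is a little more self-contained and slightly stronger; the paper's is shorter given that Lemma \ref{bl} is already available. One small remark: the hypothesis $\dim A<\infty$ is not really needed here (if $\dim A/\q=\infty$ the second case applies vacuously), and your closing sentence slightly overstates its role, but this does not affect correctness.
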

\begin{proof}
Let $N$ be a non-zero finitely generated $A$-submodule of $E(A/\q)$. Let $\p$ be a minimal prime of $N$ with $\dim A/\p = \dim N$. Note $\p \in \Ass N \subseteq \Ass E(A/\q) = \{ \q \}$. So $\p = \q$. It follows that $\dim N = \dim A/\q$.
As a consequence we have that $D_k(E(A/\q)) = 0$ if $\dim A/\q > k$.

Now assume $\dim A/\q \leq k$. Let $\xi \in E(A/\q)$ be non-zero. Set $N = A\xi$. 
Then $\dim N = \dim A/\q \leq k$. So $\xi \in D_k(E(A/\q))$. It  follows that
$D_k(E(A/\q)) = E(A/\q)$ if $\dim A/\q \leq k$.
\end{proof}

\section{Localization}
In this section we assume that $A$ satisfies our assumptions \ref{assump}. The goal of this section is to prove the following:
\begin{theorem}\label{local}
Assume $A$ satisfies \ref{assump}. Let $M$ be an $A$-module and let $\p$ be a prime ideal in $A$. Set $r = \dim A/\p$. Then for all $k \geq 0$ we have
\[
D^i_{k+r, A}(M)_\p \cong D^i_{k,A_\p}(M_\p) \quad \text{for all }  \ i \geq 0.
\]
\end{theorem}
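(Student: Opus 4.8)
The plan is to reduce the statement to a statement about injective resolutions and the behavior of the functors $D_k$ on injective modules. Recall that localization at $\p$ is exact, so if $E^\bullet$ is an injective resolution of $M$ over $A$, then $E^\bullet_\p$ is an injective resolution of $M_\p$ over $A_\p$ (each $E^i$ is a direct sum of copies of various $E(A/\q)$, and $E(A/\q)_\p = E(A_\p/\q A_\p)$ if $\q \subseteq \p$ and $0$ otherwise). Hence $D^i_{k,A_\p}(M_\p)$ is the $i$-th cohomology of $D_{k,A_\p}(E^\bullet_\p)$, while $D^i_{k+r,A}(M)_\p$ is the $i$-th cohomology of $\bigl(D_{k+r,A}(E^\bullet)\bigr)_\p$. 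So it suffices to prove, for each individual injective $A$-module $E$, a natural isomorphism of complexes (in fact of modules, degreewise)
\[
\bigl(D_{k+r,A}(E)\bigr)_\p \cong D_{k,A_\p}(E_\p).
\]
By Proposition \ref{dir-sum}(b) and the fact that localization commutes with direct sums, it is enough to check this when $E = E(A/\q)$ for a single prime $\q$.

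For the single-injective-hull case I would argue as follows. If $\q \not\subseteq \p$, then $E(A/\q)_\p = 0$ and there is nothing to prove, so assume $\q \subseteq \p$. By Lemma \ref{comput} applied over $A$, $D_{k+r,A}(E(A/\q))$ is all of $E(A/\q)$ if $\dim A/\q \leq k+r$ and is $0$ otherwise; localizing, the left-hand side is $E(A_\p/\q A_\p)$ or $0$ accordingly. By Lemma \ref{comput} applied over $A_\p$ (note $\dim A_\p$ is finite since $A$ is catenary of finite dimension), the right-hand side $D_{k,A_\p}(E(A_\p/\q A_\p))$ is $E(A_\p/\q A_\p)$ if $\dim A_\p/\q A_\p \leq k$ and $0$ otherwise. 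Thus the whole theorem comes down to the numerical equivalence
\[
\dim A/\q \leq k+r \quad \Longleftrightarrow \quad \height(\p/\q) = \dim A_\p/\q A_\p \leq k,
\]
given that $r = \dim A/\p$ and $\q \subseteq \p$. This is exactly where hypotheses \ref{assump} enter: since $A/\q$ is catenary and, by equidimensionality and the maximal-ideal condition, $\dim A/\q = \height(\p/\q) + \dim A/\p$ for every prime $\p \supseteq \q$ (one applies the dimension formula for catenary rings together with assumption \ref{assump}(3),(4) to the quotient $A/\q$, which inherits these properties). Substituting $\dim A/\p = r$ gives $\dim A/\q = \height(\p/\q) + r$, and the displayed equivalence follows by subtracting $r$.

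I expect the main obstacle to be the dimension identity $\dim A/\q = \height(\p/\q) + \dim A/\p$, i.e., verifying carefully that the four conditions in \ref{assump} pass to $A/\q$ and are exactly what is needed to make the catenary dimension formula global (the subtlety is that catenary alone only gives that saturated chains between two fixed primes have the same length; one needs equidimensionality and the condition on maximal ideals to compare $\height(\p/\q)+\dim A/\p$ across different $\p$, and to know $\dim A/\q$ is computed by a chain through $\p$). The remaining ingredients — exactness of localization on injective resolutions, the decomposition of injectives, compatibility of $D_k$ with direct sums via Proposition \ref{dir-sum}(b), and Lemma \ref{comput} — are routine, so I would state the dimension lemma as a separate preliminary and then assemble the proof in the order: (1) dimension lemma; (2) reduce to injectives via exactness of localization; (3) reduce to a single $E(A/\q)$ via Proposition \ref{dir-sum}(b); (4) conclude with Lemma \ref{comput} over $A$ and over $A_\p$ and the dimension lemma.
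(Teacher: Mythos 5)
Your proposal is correct and takes essentially the same route as the paper: localize a minimal injective resolution, reduce via Proposition~\ref{dir-sum}(b) to a single injective hull $E(A/\q)$, evaluate both sides with Lemma~\ref{comput}, and translate into the numerical equivalence governed by the dimension formula $\dim A/\q = \height(\p/\q) + \dim A/\p$, which is exactly the paper's Lemma~\ref{ht-prep} (the paper proves it directly in $A$ by localizing at a maximal ideal, rather than by first verifying that all of \ref{assump} descends to $A/\q$ --- a point where your sketch is slightly circular, since establishing \ref{assump}(4) for $A/\q$ is essentially equivalent to the formula itself). One minor streamlining in your version is that you invoke exactness of localization to get $H^i\bigl(D_{k+r,A}(\I)\bigr)_\p \cong H^i\bigl(D_{k+r,A}(\I)_\p\bigr)$ directly, whereas the paper reaches the same conclusion via the auxiliary short exact sequence $0 \rightarrow \K \rightarrow \D \rightarrow \D_\p \rightarrow 0$ and its long exact sequence.
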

To prove Theorem \ref{local} we need several  preparatory results. We first prove:
\begin{lemma}\label{ht-prep}
Assume $A$ satisfies \ref{assump}. Let $\p, \q$ be prime ideals in $A$ with $\q \subseteq \p$. Then
\begin{equation*}
\dim A/\q = \height (\p/\q) + \dim A/\p =  \dim A_\p/\q A_\p + \dim A/\p. 
\end{equation*}
\end{lemma}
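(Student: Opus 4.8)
The plan is to fix, once and for all, a minimal prime $\p_0$ of $A$ contained in $\q$, and then to exploit the fact that, under \ref{assump}, every saturated chain of primes running from a minimal prime of $A$ to a maximal ideal has length exactly $\dim A$. Since a quotient of a catenary ring is catenary, the domains $A/\p_0$, $A/\q$, $A/\p$ are all catenary of finite dimension, so all the "heights" below are honest lengths of saturated chains and the arithmetic is unambiguous.

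The heart of the argument is the following claim: for every prime $\mathfrak{r}$ of $A$ with $\p_0 \subseteq \mathfrak{r}$ one has $\dim A/\mathfrak{r} = \dim A - \height(\mathfrak{r}/\p_0)$. To prove it, pick any maximal ideal $\m$ with $\mathfrak{r} \subseteq \m$. By catenarity (\ref{assump}(2)) and \ref{assump}(4),
\[
\dim A = \height(\m/\p_0) = \height(\mathfrak{r}/\p_0) + \height(\m/\mathfrak{r}),
\]
so $\height(\m/\mathfrak{r}) = \dim A - \height(\mathfrak{r}/\p_0)$ is independent of the choice of $\m \supseteq \mathfrak{r}$. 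Because $A$, and hence $A/\mathfrak{r}$, is catenary of finite dimension, every maximal chain of primes starting at $\mathfrak{r}$ has this common length, whence $\dim A/\mathfrak{r}$ equals it. (Applied to $\mathfrak{r}=\p_0$ this just recovers \ref{assump}(3).)

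Next I would apply the claim twice, to $\mathfrak{r}=\q$ and to $\mathfrak{r}=\p$, and combine the two resulting identities with the catenary additivity $\height(\p/\p_0) = \height(\q/\p_0) + \height(\p/\q)$; subtracting gives $\dim A/\p = \dim A/\q - \height(\p/\q)$, which is the first asserted equality. For the second equality one only needs the standard identification $A_\p/\q A_\p = (A/\q)_{\p/\q}$, a local ring of dimension $\height_{A/\q}(\p/\q) = \height(\p/\q)$; this part uses nothing beyond Noetherianness.

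The one genuinely delicate point is the claim, specifically the assertion that $\dim A/\mathfrak{r}$ — a priori merely a supremum of chain lengths — is actually attained and equals $\height(\m/\mathfrak{r})$ for \emph{every} maximal ideal $\m \supseteq \mathfrak{r}$. This is exactly where \ref{assump}(4) (in tandem with catenarity) is indispensable; without an "equi-heightedness" hypothesis of this kind the lemma is simply false. Everything else is routine bookkeeping with saturated chains in catenary rings.
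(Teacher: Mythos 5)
Your argument is correct, and it establishes the lemma from exactly the same two ingredients the paper uses (catenarity and condition (4) of \ref{assump}), but the packaging is genuinely different. The paper's proof works with \emph{absolute} heights: it first establishes the auxiliary identity $\dim A/\p + \height\p = \dim A$, and to control $\height\p$ it extends a saturated chain from a minimal prime up through $\p$ to a maximal ideal $\m$, localizes at $\m$, and invokes a lemma of Matsumura on catenary local rings to obtain $\height\m = \height\p_\m + \height(\m/\p)$; from this it extracts $\height(\p/\q) = \height\p - \height\q$ and finishes. You instead fix a single minimal prime $\p_0 \subseteq \q$ once and for all and work entirely with \emph{relative} heights $\height(\cdot/\p_0)$, obtaining the key identity $\dim A/\mathfrak{r} = \dim A - \height(\mathfrak{r}/\p_0)$ directly from the catenary additivity $\height(\m/\p_0) = \height(\mathfrak{r}/\p_0) + \height(\m/\mathfrak{r})$ together with $\height(\m/\p_0) = \dim A$; no localization or external reference is needed. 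The upshot is the same chain arithmetic, but your route is more self-contained and avoids passing through $\height\p$ (which, in a non-local ring, is a slightly more delicate quantity to control). You also correctly isolate the crucial role of \ref{assump}(4): it is precisely what makes $\height(\m/\mathfrak{r})$ independent of the ambient maximal ideal, without which the supremum defining $\dim A/\mathfrak{r}$ could not be replaced by a single chain length. Your treatment of the second equality via $A_\p/\q A_\p \cong (A/\q)_{\p/\q}$ matches the intent of the paper and is unproblematic.
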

\begin{proof}
It is easy to see that if $\m$ is a maximal ideal of $A$ then $A_\m$ satisfies the conditions of \ref{assump}.
We also get
\begin{equation*}
\dim A/\p + \height \p = \dim A. \tag{$\dagger$}
\end{equation*}

 We first note the following:
if $ \p_0 \subseteq \p_1 \subseteq \cdots \p_r = \p$ is a saturated chain of prime ideals with $\p_0$ a minimal prime then $r = \height \p$. To see this extend it to a maximal chain
$ \p_0 \subseteq \p_1 \subseteq \cdots \subseteq \p_s = \m$ where $\m$ is a maximal ideal in $A$. Then by assumption on $A$ we get $s = \dim A$. Localize at $\m$. Then by \cite[Lemma 2, p.\ 250]{Mat} we get that $\height \m = \height \p_\m + \height( \m/\p)$.  Note $\height \p_\m = \height \p \geq r$ and $\height(\m/\p) \geq s-r$. As  $\height \m = \dim A = s$ we get that 
$r = \height \p$ and $s-r = \height(\m/\p)$.  It is now elementary to see that $\height(\p/\q) = \height \p - \height \q$.

Note that by $(\dagger)$ we get  $\dim A/\q - \dim A/\p = \height \p - \height \q$.
The result follows.

\end{proof}
\begin{lemma}\label{prep-lemma}
Assume $A$ satisfies \ref{assump}. Let $\p$ be a prime ideal in $A$. Set $r = \dim A/\p$. Let $\q$ be a prime ideal in $A$ with $\q \subseteq \p$. Let $k \geq 0$.
Then
\[
D_{k+r, A}(E_A(A/\q))  \cong  D_{k+r, A}(E(A/\q))_\p   \cong  D_{k,A_\p}(E_{A_\p}(A_\p/\q A_\p)).
\]
\end{lemma}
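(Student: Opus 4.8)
The plan is to reduce everything to the explicit computation of Lemma~\ref{comput} together with the dimension formula of Lemma~\ref{ht-prep}. The key preliminary observation is the classical fact that, since $\q \subseteq \p$, every element of $A \setminus \p$ also lies in $A \setminus \q$, and such elements act invertibly on $E_A(A/\q)$ (because $\Ass E_A(A/\q) = \{\q\}$). Hence the localization map $E_A(A/\q) \rt E_A(A/\q)_\p$ is an isomorphism, and moreover $E_A(A/\q) \cong E_{A_\p}(A_\p/\q A_\p)$ as $A_\p$-modules, localization carrying injective hulls to injective hulls. This disposes of all the ``change of base ring'' issues; what is left is to match the functor $D_{k+r,A}$ against $D_{k,A_\p}$ on this particular module.

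First I would establish the isomorphism $D_{k+r,A}(E_A(A/\q)) \cong D_{k+r,A}(E(A/\q))_\p$. By Lemma~\ref{comput} (applicable since $\dim A < \infty$ by \ref{assump}), the module $D_{k+r,A}(E_A(A/\q))$ is either all of $E_A(A/\q)$, when $\dim A/\q \le k+r$, or else $0$. In the first case, localizing at $\p$ changes nothing by the observation above; in the second case the localization of $0$ is $0$. Either way the first isomorphism holds.

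For the second isomorphism I would apply Lemma~\ref{comput} once more, now over the ring $A_\p$ (which again has finite dimension): the module $D_{k,A_\p}(E_{A_\p}(A_\p/\q A_\p))$ equals $E_{A_\p}(A_\p/\q A_\p)$ when $\dim A_\p/\q A_\p \le k$ and is $0$ otherwise. Here Lemma~\ref{ht-prep} enters: it gives $\dim A_\p/\q A_\p = \dim A/\q - r$, so $\dim A_\p/\q A_\p \le k$ if and only if $\dim A/\q \le k+r$. Thus both functors vanish in exactly the same case, and in the non-vanishing case both outputs are identified with $E_{A_\p}(A_\p/\q A_\p) \cong E_A(A/\q)_\p$; comparing the two case analyses termwise yields the full chain of isomorphisms.

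I do not expect a genuine obstacle here: the whole point is that the index shift by $r = \dim A/\p$ exactly cancels the drop in dimension caused by localizing at $\p$, which is precisely what Lemma~\ref{ht-prep} was set up to supply (and the reason it is proved first). The only mild care needed is to note that all the isomorphisms involved are induced by the localization map, hence are natural enough to be reused in the inductive arguments that follow; for the statement as displayed, the termwise identification above is all that is required.
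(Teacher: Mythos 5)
Your proposal is correct and follows essentially the same route as the paper: invoke Lemma~\ref{comput} on both sides, use Lemma~\ref{ht-prep} to translate $\dim A_\p/\q A_\p \le k$ into $\dim A/\q \le k+r$ so the two case analyses line up, and note $E_A(A/\q) \cong E_A(A/\q)_\p \cong E_{A_\p}(A_\p/\q A_\p)$. The only difference is cosmetic: you spell out the Matlis-theoretic reason why localizing at $\p \supseteq \q$ leaves the injective hull unchanged, a fact the paper uses silently.
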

\begin{proof}
As $A$ satisfies \ref{assump}, by \ref{ht-prep} we get 
\begin{equation*}
\dim A/\q = \height (\p/\q) + \dim A/\p =  \dim A_\p/\q A_\p + r. \tag{*}
\end{equation*}

To prove our result we consider two cases.

Case 1: $\dim A/\q \leq k +r$. \\
By $(*)$ this holds if and only if $\dim A_\p/\q A_\p \leq k$.
By 
Lemma \ref{comput}  we have
\[
D_{k+r,A}(E_A(A/\q)) = E_A(A/\q) \quad \text{and} \quad 
D_{k,A_\p}(E_{A_\p}(A_\p/\q A_\p))  = E_{A_\p}(A_\p/\q A_\p).
\]
The result follows since $E_A(A/\q)  \cong E_A(A/\q)_\p  \cong E_{A_\p}(A_\p/\q A_\p)$.

Case 2. $\dim A/\q > k + r$. \\
By $(*)$ this holds if and only if $\dim A_\p/\q A_\p > k$.  By Lemma \ref{comput} we have
\[
D_{k+r, A}(E_A(A/\q)) = 0 \quad \text{and} \quad
D_{k, A_\p}(E_{A_\p}(A_\p/\q A_\p))  = 0.
\]
The result follows.
\end{proof}

We now show:
\begin{proposition}\label{loc-basic}
Assume $A$ satisfies \ref{assump}. Let $\p$ be a prime ideal in $A$. Set $r = \dim A/\p$. Let $M$ be an $A$-module. Let $k \geq 0$.  Then
\[
D_{k+r, A}(M)_\p \cong D_{k,A_\p}(M_\p).
\]
\end{proposition}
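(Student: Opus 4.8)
The plan is to realize both $D_{k+r,A}(M)_\p$ and $D_{k,A_\p}(M_\p)$ as submodules of $M_\p$ --- the former by localizing the canonical inclusion $D_{k+r,A}(M) \hookrightarrow M$, the latter by definition --- and to prove that these two submodules of $M_\p$ coincide; the asserted isomorphism is then the resulting canonical one (which, incidentally, is natural in $M$). The two facts I would keep in hand are Lemma \ref{ht-prep}, used in the form $\dim A/\q = \dim A_\p/\q A_\p + r$ for every prime $\q$ with $\q \subseteq \p$, and the elementary observation that the dimension of a finitely generated module is the supremum of $\dim A/\q$ as $\q$ runs over its minimal primes.

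The inclusion $D_{k+r,A}(M)_\p \subseteq D_{k,A_\p}(M_\p)$ is the easy half. Given $\xi/s$ with $\xi \in D_{k+r,A}(M)$ and $s \notin \p$, Lemma \ref{bl} produces a finitely generated submodule $N \subseteq M$ with $\xi \in N$ and $\dim_A N \leq k+r$. The primes of $A_\p$ in $\Supp_{A_\p} N_\p$ are exactly the $\q A_\p$ with $\q \in \Supp_A N$ and $\q \subseteq \p$, and for each such $\q$ Lemma \ref{ht-prep} gives $\dim A_\p/\q A_\p = \dim A/\q - r \leq \dim_A N - r \leq k$. Hence $\dim_{A_\p} N_\p \leq k$, so $N_\p \subseteq D_{k,A_\p}(M_\p)$, and in particular $\xi/s \in D_{k,A_\p}(M_\p)$.

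The content is in the reverse inclusion $D_{k,A_\p}(M_\p) \subseteq D_{k+r,A}(M)_\p$. Take $0 \neq \eta \in D_{k,A_\p}(M_\p)$ (the case $\eta=0$ being trivial). Write $\eta = x/u$ with $x \in M$ and $u \notin \p$; since $u$ is a unit in $A_\p$ we have $A_\p\eta = A_\p(x/1) \cong A_\p/JA_\p$ where $J := \ann_A x$, and by Lemma \ref{bl} over $A_\p$ this module has dimension $\leq k$, so $\dim A_\p/JA_\p \leq k$. Because $\eta \neq 0$ the ideal $J$ lies in $\p$. The crucial move is to clear the denominator by passing to the saturation $J^{\mathrm{sat}} := \bigcup_{v \notin \p}(J:v)$, which by Noetherianity equals $(J:s)$ for a single $s \notin \p$ and which satisfies $J^{\mathrm{sat}} \subseteq \p$, $J^{\mathrm{sat}}A_\p = JA_\p$, and $(J^{\mathrm{sat}}:v) = J^{\mathrm{sat}}$ for every $v \notin \p$. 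This last property says that each element of $A \setminus \p$ is a nonzerodivisor on $A/J^{\mathrm{sat}}$; since a minimal prime of $J^{\mathrm{sat}}$ is associated to $A/J^{\mathrm{sat}}$ and hence consists of zerodivisors, no minimal prime of $J^{\mathrm{sat}}$ meets $A \setminus \p$. Thus every minimal prime $\q$ of $J^{\mathrm{sat}}$ satisfies $\q \subseteq \p$ and, because $J^{\mathrm{sat}}A_\p = JA_\p$, yields a minimal prime $\q A_\p$ of $JA_\p$; therefore $\dim A_\p/\q A_\p \leq k$, and Lemma \ref{ht-prep} upgrades this to $\dim A/\q \leq k+r$. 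Taking the supremum over the minimal primes gives $\dim_A(A(sx)) = \dim A/(J:s) = \dim A/J^{\mathrm{sat}} \leq k+r$, so $sx \in D_{k+r,A}(M)$ and $\eta = (sx)/(su) \in D_{k+r,A}(M)_\p$, which completes the argument.

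The step I expect to be the real obstacle is precisely this denominator-clearing. Nothing forces the cyclic submodule $Ax$ of $M$ to have $A$-dimension $\leq k+r$ --- it may have components away from $\p$ of large dimension --- so one cannot simply take $\xi = x$. Replacing $x$ by $sx$ for a well-chosen $s \notin \p$, equivalently replacing $\ann_A x$ by its $\p$-saturation, is exactly what is needed to push all minimal primes below $\p$, where Lemma \ref{ht-prep} can convert the given dimension bound over $A_\p$ into the required bound over $A$. Once that is arranged the remaining verifications are routine.
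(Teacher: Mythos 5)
Your proof is correct, and it takes a genuinely different route from the paper's. The paper reduces to the case of an injective module $M$, decomposes it via Matlis theory as $\bigoplus_\q E_A(A/\q)^{\mu_\q}$, applies the direct-sum compatibility of $D_k$ (Proposition \ref{dir-sum}(b)) together with the computation of $D_k$ on injective hulls (Lemma \ref{comput} and \ref{prep-lemma}), and then passes to an arbitrary $M$ by embedding into an injective $I$ and using $D_{k+r}(M) = D_{k+r}(I)\cap M$ (Proposition \ref{dir-sum}(a)) plus the fact that localization commutes with intersections. You instead argue element-by-element directly from the definition: both sides are realized as submodules of $M_\p$, the easy inclusion comes from localizing a small witnessing submodule, and the hard inclusion hinges on replacing $\ann_A x$ by its $\p$-saturation $(J:s)$, which kills the spurious high-dimensional components of $Ax$ lying away from $\p$ so that $\dim A(sx) \leq k+r$ can be read off from the $A_\p$-dimension bound via Lemma \ref{ht-prep}. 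Each step of your argument checks out (the directed union $\bigcup_{v\notin\p}(J:v)$ does stabilize at a single $(J:s)$ by Noetherianity; the nonzerodivisor property of $A\setminus\p$ on $A/J^{\mathrm{sat}}$ does force every minimal prime of $J^{\mathrm{sat}}$ into $\p$; and $\ann_A(sx) = (J:s) = J^{\mathrm{sat}}$). Your approach is more elementary in that it avoids Matlis theory and the structure of injective modules entirely; the paper's approach is arguably better adapted to its immediate purpose, since the very next step (Theorem \ref{local}) applies the proposition termwise to a minimal injective resolution, so the injective case is the one that actually gets used and the machinery is already in place.
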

\begin{proof}
We consider two cases.

Case 1: $M$ is an injective $A$-module. By Matlis theory, cf. \cite[18.5]{Mat}
\[
M = \bigoplus_{\q \in \Spec A} E_A(A/\q)^{\mu_\q}.
\]
Notice $\mu_\q  =  \dim_{\kappa(\q)} \Hom_{A_\q}(\kappa(\q), M_\q)$ (here $\kappa(\q)$  is the residue field of $A_\q$).
By Proposition \ref{dir-sum} we have
\[
D_{k+r,A}(M) = \bigoplus_{\q \in \Spec A}D_{k+r,A}( E_A(A/\q))^{\mu_\q}.
\]
Now note that 
\[
M_\p =  \bigoplus_{\q \subseteq \p} E_{A_\p}(A_\p/\q A_\p)^{\mu_\q}.
\]
Therefore by Proposition \ref{dir-sum} we get that
\[
D_{k,A_\p}(M_\p) = \bigoplus_{\q \subseteq \p} D_{k,A_\p}(E_{A_\p}(A_\p/\q A_\p))^{\mu_\q}.
\]
The result now follows from
Proposition \ref{prep-lemma}.

Case 2:  $M$ is an arbitrary $A$-module. 

Embed $M$ into an injective $A$-module $I$. Then note that $M_\p$ is a submodule of $I_\p$.

By Proposition \ref{dir-sum} we get $D_{k+r,A}(M) = D_{k+r, A}(I) \cap M$. So  we get 
\begin{align*}
D_{k+r,A}(M)_\p &= (D_{k+r,A}(I) \cap M)_\p, \\
&\cong  D_{k+r,A}(I)_\p \cap M_\p; \  \  \  \text{by \cite[7.4(i)]{Mat}},\\
&\cong  D_{k,A_\p}(I_\p) \cap M_\p;  \  \  \ \text{by Case 1},\\
&=   D_{k,A_\p}(M_\p);  \  \  \ \text{by Proposition \ref{dir-sum}}.
\end{align*}

\end{proof}
We now give
\begin{proof}[Proof of Theorem \ref{local}]
Let $\I$ be a minimal injective resolution of $M$. Then note that $\I_\p$ is a minimal injective resolution of $M_\p$, \cite[Lemma 6, p.\ 149]{Mat}. Consider the complex $\D = D_{k+r,A}(\I)$. By \ref{loc-basic} we get that $\D_\p = D_{k,A_\p}(\I_\p)$. As $\D$ is a complex of injectives,  the map $\D \rt \D_\p$ is a surjective map of complexes. So we have an exact 
sequence of complexes $0 \rt \K \rt \D \rt \D_\p \rt 0$. Observe that by \ref{comput}, 
$$ \K^i = \bigoplus_{ \substack{\q \nsubseteq \p \\ \dim A/\q \leq r + k} }E_A(A/\q)^{\mu_i(M,\q)}. $$
It follows that $\K_\p = 0$.

The short exact sequence of complexes $ 0 \rt \K \rt \D \rt \D_\p \rt 0$ yields  a long exact sequence
\[
 \cdots \rt H^i(\K) \rt H^i(\D) \rt H^i(\D_\p) \rt H^{i+1}(\K) \cdots 
\]
As $K_\p = 0$ we get that $H^i(\K)_\p = 0$ for all $i$. Thus $H^i(\D)_\p \cong H^i(\D_\p)_\p = H^i(\D_\p)$.
The result follows.
\end{proof}
\section{Proof of Theorem \ref{main}}
In this section we prove Theorem \ref{main} by induction on $n$. We prove the base case $n = 1$ separately.
\begin{proposition}\label{base}
Assume $A$ satisfies \ref{assump}. Let $M$ be a finitely generated equidimensional $A$-module of dimension $\geq 1$. The following conditions are equivalent:
\begin{enumerate}[\rm (i)]
\item
$M$ satisfies $S_1$.
\item
$D_k(M) = 0 $ for all $k < \dim M$.
\end{enumerate}
\end{proposition}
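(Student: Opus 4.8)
The plan is to characterize $S_1$ for an equidimensional module directly in terms of associated primes, and then translate this into the vanishing of $D_k(M)$. Recall that for an equidimensional module $M$ of dimension $d$, $M$ satisfies $S_1$ if and only if $M$ has no embedded primes, equivalently every associated prime of $M$ is a minimal prime, equivalently $\dim A/\p = d$ for all $\p \in \Ass M$. (Indeed $S_1$ says $\depth M_\p \geq \min\{1, \dim M_\p\}$; this fails exactly at a prime $\p$ with $\dim M_\p \geq 1$ but $\depth M_\p = 0$, i.e. at an embedded associated prime of $M$; and since $M$ is equidimensional, $\dim M_\p \geq 1$ for every non-minimal $\p \in \Supp M$.) So the first step is to record this reformulation carefully, using equidimensionality of $M$.

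Next I would analyze $D_k(M)$ for the finitely generated module $M$. By Lemma \ref{bl}, every element of $D_k(M)$ lies in a finitely generated submodule of dimension $\leq k$; since $M$ is Noetherian, $D_k(M)$ is itself the largest submodule of $M$ of dimension $\leq k$, so $\dim D_k(M) \leq k$. The key point for the forward direction (i) $\Rightarrow$ (ii): if $M$ satisfies $S_1$ and $k < d$, then $D_k(M)$ is a submodule of $M$ with $\dim D_k(M) \leq k < d$; but any nonzero submodule $N$ of $M$ has an associated prime, which is an associated prime of $M$, hence (by the $S_1$ reformulation) a minimal prime of $M$ with $\dim A/\p = d$, forcing $\dim N = d$. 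This contradicts $\dim D_k(M) < d$ unless $D_k(M) = 0$. For the converse (ii) $\Rightarrow$ (i): suppose $M$ does not satisfy $S_1$, so $M$ has an embedded prime $\p$ with $\dim A/\p = s < d$. Then the $\p$-primary component phenomenon gives a nonzero submodule of $M$ supported only at primes containing $\p$; more precisely, take $0 \to M' \to M$ where $M' = H^0_{\p}(\text{something})$ — cleaner is: let $\p \in \Ass M$ with $\dim A/\p = s < d$ and consider the submodule $N = \{x \in M : \p^t x = 0 \text{ for some } t\}$ (the $\p$-torsion), which is nonzero, finitely generated, and has $\Supp N \subseteq V(\p)$, hence $\dim N = \dim A/\p = s < d$. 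Then $0 \neq N \subseteq D_s(M)$ with $s < d = \dim M$, contradicting (ii).

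The main obstacle — really the only delicate point — is making sure the dimension bookkeeping is airtight and uses all of the hypotheses correctly. Two things need care: first, that a nonzero finitely generated submodule $N \subseteq M$ with $\Supp N \subseteq V(\p)$ actually has $\dim N = \dim A/\p$ (this is immediate: $\p \in \Ass N$ since $N$ is $\p$-torsion and nonzero, and $\Supp N \subseteq V(\p)$ gives the reverse bound, so equidimensionality of $A$ is not even needed here, only that $\p$ is the unique minimal prime of $\Supp N$); and second, that the argument in (i) $\Rightarrow$ (ii) genuinely needs $M$ equidimensional — without it, $M$ could have a minimal prime $\p$ with $\dim A/\p < d$, and the $\p$-primary submodule would live in some $D_k(M)$ with $k < d$ even though $M$ satisfies $S_1$. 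So I would emphasize that equidimensionality of $M$ is exactly what makes "$\Ass M \subseteq \Min M$" equivalent to "$\dim A/\p = d$ for all $\p \in \Ass M$."

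Here is the structure in brief:
\begin{enumerate}[\rm (1)]
\item Reformulate: for $M$ equidimensional of dimension $d$, $M$ satisfies $S_1$ $\iff$ $\dim A/\p = d$ for every $\p \in \Ass M$.
\item Observe $D_k(M)$ is the largest submodule of $M$ of dimension $\leq k$, so $\dim D_k(M) \leq k$, and every nonzero submodule of $M$ has an associated prime lying in $\Ass M$.
\item (i) $\Rightarrow$ (ii): combine (1) and (2) to force $D_k(M) = 0$ when $k < d$.
\item (ii) $\Rightarrow$ (i): if $S_1$ fails, produce a nonzero $\p$-torsion submodule of dimension $\dim A/\p < d$ inside $D_k(M)$ for suitable $k < d$, contradicting (ii).
\end{enumerate}
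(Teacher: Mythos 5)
Your proposal is correct and follows essentially the same route as the paper: for (i) $\Rightarrow$ (ii) both arguments reduce to the fact that, under $S_1$ plus equidimensionality, every associated prime of $M$ has $\dim A/\p = \dim M$, so any nonzero finitely generated submodule coming from Lemma \ref{bl} has full dimension; for (ii) $\Rightarrow$ (i) both exhibit, from an embedded associated prime $\p$, a nonzero submodule of dimension $\dim A/\p < \dim M$ (you use the $\p$-torsion submodule, the paper uses the image of an injection $A/\p \hookrightarrow M$, which is the same idea).
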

\begin{proof}
We first assume $M$ satisfies $S_1$. Then $\dim M_\p \geq 1$ if and only if $\p \notin  \Ass M$. Suppose $\xi \in D_k(M)$ is non-zero. Then by \ref{bl}  there exists a finitely generated submodule $N$ of $M$ with $\dim N \leq k$ and $\xi \in N$. Let $\p \in \Ass N$ be such that $\dim A/\p = \dim N$. Note $\p \in \Ass M$. It follows that
$\p \in \Min M$. So $\dim N = \dim M$. It follows that $k \geq \dim M$. Thus $D_k(M) = 0$ for $k < \dim M$.

Conversely assume that $D_k(M) = 0 $ for all $k < \dim M$. Suppose if possible  $M$ does not satisfy $S_1$. Then there exists $\p$ with $\dim M_\p \geq 1$ and $\depth M_\p = 0$. Thus $\p \in \Ass M$. So we have an injection $A/\p \rt M$. Notice $c = \dim A/\p < \dim M$. Thus $D_c(M) \neq 0$, a contradiction.
\end{proof}

We now give
\begin{proof}[Proof of Theorem \ref{main}]
We prove the result by induction on $n$. We have proved the result for $n = 1$, see \ref{base}. We assume the result for $n-1 \geq 1$ and prove it for $n$.

We first assume that $M$ satisfies $S_n$-property. As $M$ also satisfies $S_{n-1}$ we get by induction hypothesis that $D^j_k(M) = 0$ for $k < \dim M - j$ and $j = 0,1,\cdots, n-2$
Let $\I$ be a minimal injective resolution for $M$. As $M$ satisfies $S_n$ we get that  for $i \leq n-1$,
\[
\I^i = \bigoplus_{\dim M_\p \leq i} E(A/\p)^{\mu(\p, M)}.
\]
Suppose $\xi \in D_k(\I^{n-1})$ is non-zero. Then by \ref{bl} there exists a finitely generated $A$-submodule $N$ of $\I^{n-1}$ with $\dim N \leq k$ and $\xi \in N$. Let $\p$ be a minimal prime of $N$ with $\dim A/\p = \dim N \leq k$. Then $\p \in \Ass \I^{n-1}$.
So $\dim M_\p \leq n -1$. Let $\q$ be a minimal prime of $M$ contained in $\p$. Then
by \ref{ht-prep} we get
$$\dim M = \dim A/\q = \dim A_\p /\q A_\p + \dim A/\p \leq \dim M_\p  + \dim N \leq n -1 + \dim N. $$
So $\dim N \geq \dim M - n +1$. It follows that $D_k(\I^{n-1}) = 0$ for $k < \dim M - n+1$. Thus $D^{n-1}_k(M) = 0$ for $k < \dim M - n + 1$. 

We now assume that $D^i_k(M) = 0$ for  $i = 0,1,\ldots, n-1$ and $0 \leq k < \dim M - i$. By induction hypotheses it follows that $M$ satisfies $S_{n-1}$. Suppose if possible $M$ does not satisfy $S_{n}$. Then there exists a prime ideal $\p$ with $\dim M_\p \geq n$ and $\depth M_\p = n-1$. We localize at $\p$. We get that $D_{0,A_\p}^{n-1}(M_\p) \neq 0$. By Theorem \ref{local} it follows that $D_r^{n-1}(M) \neq 0$ where $r = \dim A/\p$. 

Claim:  $\dim M = \dim M_\p + r$.\\
Assume the claim for the moment. Then $r = \dim M - \dim M_\p \leq \dim M - n < \dim M - n + 1$. Also $D_r^{n-1}(M) \neq 0$. This contradicts our assumption.

\textit{Proof of claim}. Let $\q$ be a minimal prime of $M$ contained in $\p$ and let $\m$ be an arbitrary maximal ideal of $A$ containing $\p$. By \ref{ht-prep} we get that $\dim M = \dim A/\q = \height(\m/\q)$. As $A$ is catenary we get that $\height(\m/\q) = \height(\m/\p) + \height(\p/\q)$. We take $\q$ with $\height(\p/\q) = \dim M_\p$. Also note that again by \ref{ht-prep}, $\height(\m/\p) = \dim A/\p = r$.
\end{proof}


\begin{thebibliography}{10}
\bibitem{Mat}
H.~Matsumura, 
\emph{Commutative ring theory},
Translated from the Japanese by M. Reid. Second edition. Cambridge Studies in Advanced Mathematics, 8. Cambridge University Press, Cambridge, 1989. 

\end{thebibliography}
\end{document}